\documentclass[12pt]{amsart}
\usepackage{latexsym}
\usepackage{amsfonts}
\usepackage{hyperref}
\usepackage{xcolor}

\usepackage[top=1.2in, bottom=1in, left=1in, right=1in]{geometry}

\newtheorem{theorem}{Theorem}

\newtheorem{lemma}{Lemma}

\DeclareMathOperator{\Aut}{Aut}

\begin{document}

\author[]{Lucy Hyde}
\address{Department of Mathematics, CUNY Graduate Center, New York,
NY 10016}  \email{lhyde@gradcenter.cuny.edu}

\author[]{Siobhan O'Connor}
\address{Department of Mathematics, CUNY Graduate Center, New York,
NY 10016}  \email{doconnor@gradcenter.cuny.edu}

\author[]{Vladimir Shpilrain}
\address{Department of Mathematics, The City College of New York, New York,
NY 10031} \email{shpilrain@yahoo.com}

\title[Orbit-blocking words and Whitehead's problem]{Orbit-blocking words and the average-case complexity of Whitehead's problem in the free group of rank 2}

\begin{abstract}
Let $F_2$ denote the free group of rank 2. Our main technical result of independent interest is: for any element $u$ of $F_2$, there is $g \in F_2$
such that no cyclically reduced image of $u$ under an automorphism of $F_2$ contains $g$ as a subword. We then address computational complexity of the following version of the Whitehead automorphism problem: given a fixed $u \in F_2$, decide, on an input $v \in F_2$ of length $n$, whether or not $v$ is an automorphic image of $u$. We show that there is an algorithm that solves this problem and has constant (i.e., independent of $n$) average-case complexity.

\end{abstract}

\maketitle

\section{Introduction}

The Whitehead problem (see \cite{Wh} or \cite{Lyndonbook}) for a free group is: given two elements, $u$ and $v$, of a free group $F$, find out whether there is an automorphism of $F$ that takes $u$ to $v$.

In this paper, we address computational complexity of the following version of the Whitehead problem: given a fixed $u \in F$, decide, on an input $v \in F$ of length $n$, whether or not $v$ is an automorphic image of $u$.
We show that in the case where the free group has rank 2, there is an algorithm that solves this problem and has constant average-case complexity.

Our main technical result is of independent interest; it settles Problem (F40) from \cite{problems} in the free group $F_2$ of rank 2 (see also Problem 1 and Problem 2 in \cite{VS}):

\begin{theorem}\label{Orbit}
For any element $w$ of $F_2$, there is $g \in F_2$
such that no cyclically reduced image of $w$ under an automorphism of $F_2$ contains $g$ as a subword. Such words $g$ can be produced explicitly for any given $w$.
\end{theorem}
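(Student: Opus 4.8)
The plan is to reduce $w$ by Whitehead's algorithm to a cyclically reduced element $w_{0}$ of minimal length $n_{0}$ in its automorphic orbit — this changes neither the orbit nor its set of cyclically reduced representatives — and then to exhibit a blocking word of the form $g=[a,b]^{N}$, where $[a,b]=aba^{-1}b^{-1}$ in $F_{2}=\langle a,b\rangle$ and $N=N(w_{0})$ is a constant computed from $w_{0}$. The argument splits according to whether or not $w_{0}$ is conjugate to a power of a primitive element.

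Suppose first $w_{0}\sim p^{k}$ with $p$ primitive and $k\ge 1$. Every automorphism $\phi$ sends $p^{k}$ to $\phi(p)^{k}$ with $\phi(p)$ again primitive; writing $c$ for the cyclically reduced conjugate of $\phi(p)$, the word $c$ is primitive, $c^{k}$ is cyclically reduced, and $c^{k}$ is the cyclically reduced form of $\phi(p^{k})$. Now I would invoke the classical description of primitive elements of $F_{2}$ via Christoffel words: up to conjugacy, inversion, and a signed permutation of $\{a,b\}$, every primitive element is a positive word in two fixed letters, so a \emph{cyclically reduced} primitive element involves only two of the four letters $a,a^{-1},b,b^{-1}$, at most one from each inverse pair. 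Hence $c^{k}$ involves only two such letters, so it cannot contain $aba^{-1}$ — a word containing a letter together with its inverse — and therefore cannot contain $[a,b]^{N}$ for any $N\ge 1$. In this case $g=[a,b]$ (indeed already $g=aba^{-1}$) works, completely explicitly.

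Now suppose $w_{0}$ is not conjugate to a power of a primitive; then $n_{0}\ge 4$. Given $\phi\in\Aut(F_{2})$, write $w_{0}=\ell_{1}\ell_{2}\cdots\ell_{n_{0}}$ as a reduced word in $\{a^{\pm1},b^{\pm1}\}$, so $\phi(w_{0})=\phi(\ell_{1})\cdots\phi(\ell_{n_{0}})$; passing to the reduced and then cyclically reduced word, each factor $\phi(\ell_{i})$ survives as a (possibly empty) contiguous block $B_{i}$, and the cyclically reduced form of $\phi(w_{0})$ is the cyclic concatenation $B_{1}\cdots B_{n_{0}}$ of at most $n_{0}$ blocks, each $B_{i}$ being a subword of one of the primitive words $\phi(a)^{\pm1},\phi(b)^{\pm1}$. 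The crucial step I would establish is that, because $w_{0}$ is Whitehead-minimal and not a power of a primitive, the amount by which $\phi(a)$ and $\phi(b)$ fail to be cyclically reduced survives into $\phi(w_{0})$ only boundedly: there is an effectively computable $C=C(w_{0})$ so that each block factors as $B_{i}=x_{i}z_{i}y_{i}$, where $z_{i}$ is a word in only two of the four letters (at most one from each inverse pair) and $|x_{i}|,|y_{i}|\le C$. Since every length-$4$ subword of $[a,b]^{N}$ uses all four letters, the portion of a hypothetical occurrence of $[a,b]^{N}$ lying inside any one $z_{i}$ has length at most $3$; as the occurrence meets at most $n_{0}$ blocks, this forces $4N=|[a,b]^{N}|\le n_{0}(2C+3)$, and taking $N>n_{0}(2C+3)/4$ is a contradiction. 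Hence $g=[a,b]^{N}$ is blocked, and it is explicit because Whitehead's algorithm, the Christoffel description, and the bound $C$ are all effective.

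The main obstacle is exactly the bound $C$ in the last case — equivalently, that every cyclically reduced automorphic image of $w_{0}$ lies within bounded distance of a cyclic concatenation of at most $n_{0}$ blocks that are words in two letters. This is where the hypothesis is genuinely used: for $w_{0}=a$ the image is an arbitrarily long primitive word and no bound of this shape holds, but there the two-letter argument already applies, whereas for $n_{0}\ge 4$ with $w_{0}$ not a power of a primitive one expects — and must prove — such a bound. The natural route is to normalize $\phi$ up to inner automorphisms (harmless for the cyclic word $\phi(w_{0})$) so that the basis $\{\phi(a),\phi(b)\}$ is Nielsen reduced, and then to bound the cancellation in the product $\phi(\ell_{1})\cdots\phi(\ell_{n_{0}})$ using minimality of $w_{0}$ and peak reduction; making this cancellation estimate uniform in $\phi$ is the heart of the theorem.
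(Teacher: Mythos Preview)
Your Case~1 is correct, and your self-diagnosis in Case~2 is accurate: the entire content of the theorem lies in the bound $C$, and you have not established it. The tools you propose do not deliver it. Minimality of $w_0$ gives only $|\phi(w_0)|_{\mathrm{cyc}}\ge n_0$, which says nothing about how much of the conjugating tails of $\phi(a)$ and $\phi(b)$ survives inside any individual block $B_i$; peak reduction organises a chain of Whitehead moves from $w_0$ to $\phi(w_0)$ without giving uniform control over the shape of the basis $(\phi(a),\phi(b))$; and normalising the basis by an inner automorphism does not by itself force both entries to have the required two-letter form --- that is precisely the statement that needs proof. So as written the argument is incomplete.

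The paper closes exactly this gap, and in doing so removes the need for your case split. The two-letter fact you quote for a single primitive is the Cohen--Metzler--Zimmermann description of primitive \emph{pairs}: if $(u,v)$ is a basis of $F_2$ with both $u$ and $v$ cyclically reduced, then after possibly swapping $a\leftrightarrow b$ and replacing generators by their inverses, every exponent on $a$ in both $u$ and $v$ has absolute value at most $1$. The paper's key lemma is that \emph{every} primitive pair is conjugate, by a single element, to one with both entries cyclically reduced; composing $\phi$ with this inner automorphism one may therefore assume $m_a(\phi(a)),\,m_a(\phi(b))\le 1$. An elementary count then yields $m_a^\circ(\phi(w))\le |w|+1$ for every $\phi$, so $g=a^{|w|+2}b^{|w|+2}$ blocks the whole orbit. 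In your framework this is $C=0$, valid for every $w$ --- no separate treatment of powers of primitives is needed, and $[a,b]^N$ can be replaced by the simpler $a^{|w|+2}b^{|w|+2}$.
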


We call elements $g$ like that {\it orbit-blocking} for $w$. This generalizes the idea of {\it primitivity-blocking} words (see e.g. \cite{VS}), i.e., words that cannot be subwords of any cyclically reduced primitive element of a free group. (A {\it primitive element} is part of a free generating set of $F$.) Examples of primitivity-blocking  words can be easily found based on an observation by Whitehead himself (see \cite{Wh} or \cite{Lyndonbook}) that the Whitehead graph of any cyclically reduced primitive element of length $>2$ has either an isolated edge or a cut vertex, i.e., a vertex that, having been removed from the graph together with all incident
edges, increases the number of connected components of the graph. A short and elementary proof of this result was recently given in \cite{Heusener}.

Our technique in the present paper is quite different and is specific to the free group of rank 2. It is based on a description of primitive elements and
{\it primitive pairs} in $F_2$ from \cite{Cohen}. We give more details and a proof of Theorem \ref{Orbit} in Section \ref{blocking}.

In Section \ref{complexity}, based on Theorem \ref{Orbit}, we establish that
the average-case complexity of the version of the Whitehead problem mentioned in the beginning of the Introduction is constant, i.e., is independent of the length of the input $v$. This generalizes a result of \cite{VS} that applies to the special case where the fixed element $u$ is primitive. The result of \cite{VS} though is valid in any free group of a finite rank, whereas our result is limited to $F_2$. Extending it to an arbitrary $F_r$ would require extending Theorem \ref{Orbit} to $F_r$ with $r>2$. While there is little doubt that Theorem \ref{Orbit} holds for any $F_r$, proving it for $r>2$ would require an altogether different approach. More details are given in Section \ref{blocking}.

\section{Orbit-blocking words}\label{blocking}

Let $F_2$ be a free group of rank 2, with generators $a$ and $b$. A {\it primitive pair} in $F_2$ is a pair of words $(u,v)$ such that for some $\varphi \in \Aut(F_2)$, $\varphi(a) = u$ and $\varphi(b) = v$.
Our proof of Theorem \ref{Orbit} relies on the following result from \cite{Cohen} characterizing primitive pairs:
\begin{theorem} \cite{Cohen} \label{basis}
    Suppose that some conjugate of
    $$u=a^{n_1}b^{m_1}\dots a^{n_p}b^{m_p}$$
    and some conjugate of
    $$v=a^{r_1}b^{s_1}\dots a^{r_q}b^{s_q}$$
    form a basis of $F(a,b)$, where $p\geq 1$, $q\geq 1$, and all of the exponents are non-zero. Then, modulo the possible replacement of $a$ by $a^{-1}$ or $b$ by $b^{-1}$ throughout, there are integers $t>0$ and $\varepsilon=\pm 1$ such that either
    \begin{gather*}
        m_1=m_2=\dots=m_p=\varepsilon s_1=\dots=\varepsilon s_q=1, \\
        \{n_1,\dots,n_p,\varepsilon r_1, \dots, \varepsilon r_q\}=\{t,t+1\}
    \end{gather*}
(the latter being an equality of sets)    or, symmetrically,
    \begin{gather*}
        n_1=n_2=\dots=n_p=\varepsilon r_1=\dots=\varepsilon r_q=1, \\
        \{m_1,\dots,m_p,\varepsilon s_1 ,\dots, \varepsilon s_q\}=\{t,t+1\}.
    \end{gather*}
\end{theorem}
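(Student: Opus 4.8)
The plan is to prove the structure theorem by induction on the total length $L=|u|+|v|$, with $\Aut(F_2)$ generated by elementary Nielsen automorphisms and Whitehead peak reduction supplying the length-decreasing moves, while the number-theoretic content is carried by the balance (Sturmian) property of the block exponents. To set up, note that replacing $a$ by $a^{-1}$, $b$ by $b^{-1}$, or swapping $a\leftrightarrow b$ are automorphisms, so I may normalize signs freely; the two alternatives in the conclusion are exactly the $a\leftrightarrow b$ swap, so it suffices to reach one of them. Since conjugation preserves the alternating form, I work with cyclic words throughout. Passing to the abelianization records $(u,v)$ as a matrix $M=\left(\begin{smallmatrix} N & R\\ P & S\end{smallmatrix}\right)\in GL_2(\Z)$, where $N=\sum_i n_i$, $P=\sum_i m_i$, $R=\sum_j r_j$, $S=\sum_j s_j$; the basis hypothesis forces $\det M=\pm1$, so the slopes $N/P$ and $R/S$ are Farey neighbors. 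This determinant condition is the arithmetic shadow of the statement but is far from sufficient, so the substance of the argument is non-abelian.

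For the induction I would invoke Whitehead peak reduction: if $(u,v)$ does not have minimal total length in its $\Aut(F_2)$-orbit, some elementary Nielsen move — a one-sided multiplication $u\mapsto uv^{\pm1}$ or $v\mapsto vu^{\pm1}$, or an inversion or swap — strictly decreases $L$. The base cases are the Nielsen-reduced bases, which in rank $2$ are short and can be listed directly; these visibly satisfy the conclusion, the set $\{t,t+1\}$ collapsing appropriately when there is a single block. For the inductive step I apply the induction hypothesis to the shortened basis $(u',v')$ — having first checked that it is still in alternating form after cyclic reduction — to obtain its block data $\{t',t'+1\}$ together with uniform $b$-exponents, and then transport this form back along the inverse move. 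The key point is that such a move, once cyclically reduced, keeps every $b$-exponent equal to $1$ and acts on the combined multiset of $a$-block exponents by a mediant step of Stern--Brocot type, which carries $\{t',t'+1\}$ to another pair of consecutive integers. Because $u$ and $v$ occupy Farey-adjacent slopes descending from a common node of the tree, they inherit the same pair $\{t,t+1\}$, which yields the stated equality of sets spanning both words.

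The main obstacle is the cancellation bookkeeping in the non-abelian reduction step: forming $uv^{\pm1}$ from two alternating-form words and cyclically reducing, I must rule out the appearance of a third block length, that is, show the multiset of $a$-exponents cannot spread to $\{t-1,t,t+1\}$ and that no $b$-exponent can drift from $1$. This rigidity is exactly what distinguishes a genuine basis from an arbitrary element of $GL_2(\Z)$, and is the place where being a basis, rather than merely the determinant condition, is used. I expect the most delicate case analysis to concern moves that merge or split adjacent syllables at the cyclic seam, where the boundary exponents must be tracked with care; verifying that the mediant recursion preserves the two-consecutive-values property — the combinatorial core of Sturmian balance — is the crux on which the whole argument turns.
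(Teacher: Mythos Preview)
The paper does not prove this theorem at all: it is quoted verbatim as a result of Cohen, Metzler, and Zimmermann \cite{Cohen} and used as a black box in the proof of Theorem~\ref{Orbit}. There is therefore no ``paper's own proof'' to compare your attempt against.

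As for the proposal itself, it is a reasonable high-level sketch of the kind of inductive argument one expects (and indeed the original paper \cite{Cohen} does proceed by induction using Nielsen moves), but as written it is not a proof. You explicitly flag the crux --- showing that a Nielsen move followed by cyclic reduction cannot spread the $a$-block exponents from $\{t,t+1\}$ to three values and cannot disturb the $b$-exponents --- and then state only that you ``expect'' the case analysis to work out. That is precisely the non-trivial content of the theorem; without actually carrying out the cancellation bookkeeping at the cyclic seam, the argument is a plan rather than a proof. The Farey/Sturmian framing is apt intuition, but the balance property for Sturmian words is itself a theorem of comparable depth, so invoking it does not shortcut the work.
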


The following lemma makes the above description even more specific:

\begin{lemma}\label{PrimitivePair2}
Every primitive pair in $F_2$ is conjugate to a primitive pair where both entries are cyclically reduced.
\end{lemma}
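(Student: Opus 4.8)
The plan is to avoid Theorem~\ref{basis} entirely and argue directly: among all simultaneous conjugates of a primitive pair, one of minimal total length already has both entries cyclically reduced. The leverage will be the classical fact that every automorphism of $F_2$ carries $[a,b]$ to a conjugate of $[a,b]^{\pm 1}$ (this follows, for instance, by checking it on the Nielsen generators of $\Aut(F_2)$), so that the commutator of any basis of $F_2$ has cyclic length exactly $4$.

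Concretely, let $(U,V)$ be a primitive pair and choose $g\in F_2$ so that $(u,v):=(gUg^{-1},gVg^{-1})$ minimizes $|u|+|v|$; such a $g$ exists since the lengths are non-negative integers. I claim both $u$ and $v$ are cyclically reduced. Suppose $u$ is not, and write $u=P\hat uP^{-1}$ as a reduced word with $\hat u$ cyclically reduced and $P\neq 1$. Conjugating $(u,v)$ by $P^{-1}$ produces the simultaneous conjugate $(\hat u,\,P^{-1}vP)$, of total length $(|u|-2|P|)+|P^{-1}vP|$; since this cannot be smaller than $|u|+|v|$ by minimality, and since $|P^{-1}vP|$ can never exceed $|v|+2|P|$, we must have $|P^{-1}vP|=|v|+2|P|$, i.e. there is no cancellation in forming the word $w:=P^{-1}vP$. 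Note that $|w|=|v|+2|P|\ge 3$ and that $(\hat u,w)$, being a simultaneous conjugate of $(U,V)$, is again a basis of $F_2$.

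Next I examine the commutator $C:=[\hat u,w]=\hat u\,w\,\hat u^{-1}w^{-1}$. Let $p$ be the last letter of $P$; since there is no cancellation in $P^{-1}vP$, the word $w$ begins with $p^{-1}$ and ends with $p$, while the first and last letters $\beta,\alpha$ of $\hat u$ satisfy $\beta\neq p^{-1}$ and $\alpha\neq p$ (because $P\hat uP^{-1}$ is reduced) and $\beta\neq\alpha^{-1}$ (because $\hat u$ is cyclically reduced). A short check of the three internal junctures of $\hat u\cdot w\cdot\hat u^{-1}\cdot w^{-1}$ and of the ``wrap-around'' juncture between the first letter of $\hat u$ and the last letter of $w^{-1}$ shows that any cancellation at one of them would force one of the excluded coincidences $\alpha=p$ or $\beta=p^{-1}$; hence $C$ is cyclically reduced, of length $2|\hat u|+2|w|\ge 2+6=8$. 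But $(\hat u,w)$ is a basis, so $C$ is the image of $[a,b]$ under an automorphism of $F_2$, hence conjugate to $[a,b]^{\pm1}$, hence of cyclic length $4$ --- contradicting $|C|\ge 8$. Therefore $u$ is cyclically reduced, and the identical argument applied to $v$ (interchanging the roles of the two entries, i.e. working with the basis $(V,U)$) shows $v$ is cyclically reduced as well.

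The only delicate point is the juncture check in the previous paragraph: it is routine but must use all three inputs at once --- $\hat u$ cyclically reduced, $P\hat uP^{-1}$ reduced, and (crucially) the no-cancellation consequence of minimality for $P^{-1}vP$ --- so in the write-up I would spell out the four junctures letter by letter, as that is the step most likely to conceal an overlooked degenerate case (note, however, that the bound $|C|\ge 8$ needs only $|\hat u|\ge 1$, so no separate treatment of the case $U\sim a^{\pm1},b^{\pm1}$ is required). For completeness I record that one could instead derive the lemma from Theorem~\ref{basis}, since the normal forms $a^{n_1}b^{m_1}\cdots a^{n_p}b^{m_p}$ appearing there are automatically cyclically reduced --- they begin with a power of $a$ and end with a power of $b$ --- but along that route one still has to show that a \emph{single} conjugation brings both entries of the pair simultaneously into such a form, which is precisely what the argument above accomplishes.
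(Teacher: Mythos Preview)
Your argument is correct. The minimality step forces $P^{-1}vP$ to be reduced as written, and your four juncture checks go through exactly as you say (only $\alpha\neq p$ and $\beta\neq p^{-1}$ are needed; the cyclic-reduction condition $\beta\neq\alpha^{-1}$ is in fact superfluous here). The resulting bound $|C|\ge 8$ then clashes with the $F_2$-specific fact that every automorphism sends $[a,b]$ to a conjugate of $[a,b]^{\pm1}$, so the cyclic length of the commutator of any basis is $4$.

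This is a genuinely different route from the paper's proof. The paper also minimizes total length over simultaneous conjugates, but to obtain a contradiction it argues that the resulting pair $(u, gh^{-1}vhg^{-1})$ (with $u$ cyclically reduced and no cancellation in the second entry) is Nielsen reduced, so that every nontrivial word in the pair either equals $u^{\pm1}$ or has length at least $2$; hence the pair cannot generate both $a$ and $b$. Your approach instead exploits the commutator invariant of $\Aut(F_2)$ and a direct length count. The trade-off: the paper's Nielsen argument invokes a general combinatorial principle about bases but leaves some verification implicit, whereas your argument is more self-contained and the juncture check is completely explicit, at the price of relying on a fact (commutator rigidity) that is special to rank~$2$ --- which is harmless here since the whole paper works in $F_2$.
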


\begin{proof}
The following proof of Lemma \ref{PrimitivePair2} was suggested by the referee.
By way of contradiction, suppose there is a primitive pair $(g^{-1}ug, ~h^{-1}vh)$
of minimum total length. Then $(u, ~gh^{-1}vhg^{-1})$ is a primitive pair, too, and the total length of the latter pair is not greater than that of the original pair.
If $g \ne h$ and there is a cancellation in $gh^{-1}vhg^{-1}$, then we get a primitive pair with a smaller total length, a contradiction. If $g \ne h$ and there is no cancellation in $gh^{-1}vhg^{-1}$, then $(u, ~gh^{-1}vhg^{-1})$ is a Nielsen reduced pair, and therefore any non-trivial product of $u$ and $gh^{-1}vhg^{-1}$ either equals $u^{\pm 1}$ or has length at least 2. Therefore, the subgroup of $F_2$ generated by $u$ and $gh^{-1}vhg^{-1}$ cannot contain both $a$ and $b$, a contradiction.
\end{proof}

For a given word $v$ we will refer to the greatest absolute value of an exponent that appears on $a$ (or $b$) in $v$ as $m_a(v)$ ($m_b(v)$, respectively), omitting $v$ if it is clear from the context. We will refer to the greatest absolute value of an exponent that appears on $a$ (or $b$) in $v$ considered as a cyclic word as $m^\circ_a(v)$ ($m^\circ_b(v)$, respectively).


\begin{lemma}\label{OneAtATime}
    Let $u_i$ be words in $F_2$ such that $m_a(u_i) \leq 1$ for all $i \in \{1,2,\dots,n\}$. Then $m_a(u_1\cdots u_n)\leq n$ and $m^\circ_a(u_1\cdots u_n) \leq n+1$.
\end{lemma}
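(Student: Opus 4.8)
The plan is to keep track, for each letter that survives in the reduced word $P:=u_1\cdots u_n$, of the factor $u_i$ it comes from, using the standard fact about free groups that in a product of reduced words the part of each factor left uncancelled is a (possibly empty) subword of that factor. Granting this, the letters of $P$, read from left to right, fall into consecutive blocks $B_1,\dots,B_n$, where $B_i$ is the surviving subword of $u_i$ (some blocks may be empty). Since $B_i$ is a subword of $u_i$ and $m_a(u_i)\le 1$, any run of consecutive letters of $P$ that lies inside a single block $B_i$ and consists only of $a^{\pm 1}$ has length at most $1$; equivalently, each factor contributes at most one letter to any given maximal $a$-syllable of $P$. From this the first inequality is immediate: a maximal $a$-syllable of $P$ meets a run of consecutive blocks $B_k,B_{k+1},\dots,B_l$ and picks up at most one letter from each, so its exponent is at most $l-k+1\le n$; hence $m_a(P)\le n$.

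For the cyclic bound I would first replace $P$ by a cyclically reduced conjugate $\bar P$. Cyclic reduction of a reduced word only deletes a prefix $x$ and the suffix $x^{-1}$, so $\bar P$ is a subword of $P$; it therefore inherits the block decomposition, now with at most $n$ blocks each of which is a subword of the corresponding $u_i$, and $m_a(\bar P)\le m_a(P)\le n$. Reading $\bar P$ as a cyclic word creates only one new adjacency, between its last and first letters, so the only maximal $a$-syllables it can merge are the first and last ones of $\bar P$; thus $m^\circ_a(P)=m^\circ_a(\bar P)$ is at most the larger of $m_a(\bar P)\le n$ and, in the case where $\bar P$ begins and ends with two distinct maximal $a$-syllables of exponents $p$ and $q$, the quantity $p+q$ (in every other case the bound is simply $m_a(\bar P)\le n$).

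It remains to show $p+q\le n+1$. Each of the two syllables picks up at most one letter from each block of $\bar P$, so $p$ and $q$ are bounded by the numbers of blocks meeting the first, respectively the last, of them. These two collections of blocks overlap in at most one block: if two distinct blocks each met both the first and the last maximal $a$-syllable of $\bar P$, then, since blocks occur along $\bar P$ in order and each of the two is entirely to one side of the other, the first and last $a$-syllables of $\bar P$ would themselves overlap, contradicting that they are distinct (hence disjoint). Both collections lie among the at most $n$ blocks of $\bar P$ and overlap in at most one block, so $p+q\le n+1$, whence $m^\circ_a(P)\le n+1$.

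The bookkeeping with origins and blocks is routine, and I would record the surviving-subword fact as a one-line preliminary rather than reprove it. The step that genuinely does the work is the "overlap in at most one block" assertion in the cyclic case: the crude estimate $m^\circ_a(P)\le 2\,m_a(P)\le 2n$ is far too weak, and it is precisely the contiguity of each block inside $\bar P$ that forces the first and last $a$-syllables to share essentially a single factor and so pins the cyclic bound at $n+1$.
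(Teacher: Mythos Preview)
Your proof is correct. Both your argument and the paper's rest on the same underlying observation---each factor $u_i$ can contribute at most one letter to any $a$-syllable of the product---but the organization is genuinely different. The paper first reduces, via a minimal-counterexample trick, to the case where no cancellation occurs between consecutive factors (so that the product is literally the concatenation and the $B_i$ are just the $u_i$), and then runs a short induction on the number of factors. You instead invoke the standard block decomposition (the surviving piece of each $u_i$ in the reduced product is a subword of $u_i$) to absorb all cancellation at once, and then count block contributions directly. Your treatment of the cyclic bound via the ``overlap in at most one block'' inclusion-exclusion is more explicit than the paper's inductive step and makes transparent \emph{why} the cyclic exponent can exceed the linear one by exactly $1$: the wraparound syllable may draw a letter from a single block at both its ends. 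The paper's route is a little shorter once the reduction is in place; yours is more self-contained and avoids the auxiliary minimization argument altogether.
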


\begin{proof}
By way of contradiction, suppose there is a tuple $(u_1,\ldots, u_n)$ 
of minimum total length that does not satisfy the conclusion of the lemma. Then we can 
    assume that there are no cancellations in the products $u_iu_{i+1}$ since otherwise we can choose our $u_i$s differently to decrease $\Sigma |u_i|$ while preserving their product and conditions of the lemma. (For example, if $u_i$ ends with $a$ and $u_{i+1}$ starts with $a^{-1}$, we can replace $u_i$ by $u_ia^{-1}$ and $u_{i+1}$ by $a u_{i+1}$; this will decrease $\Sigma |u_i|$ by 2.) 
    
    Note that since $m_a(u_1)\leq 1$, we have $m^\circ_a(u_1)\leq 2$. By induction, for $k\geq2$, if $m_a(u_1\cdots u_k)\leq k$ and $m^\circ_a(u_1\cdots u_k) \leq k+1$, then $m_a(u_1\cdots u_k u_{k+1})\leq k+1$ and $m^\circ_a(u_1\cdots u_k u_{k+1}) \leq k+2$ since $u_{k+1}$ cannot start or end with $a^{\pm 2}$ by the conditions of the lemma. Thus, our tuple $(u_1,\ldots, u_n)$ will satisfy the conclusion of the lemma, and this contradiction completes the proof.
\end{proof}

\begin{proof}[Proof of Theorem \ref{Orbit}]

    Let $w$ be our given word and $l$ its length. Let $\varphi \in \Aut(F_2)$ send $(a,b)$ to $(u,v)$. By Lemma \ref{PrimitivePair2}, Theorem \ref{basis}, and the fact that swapping $a$ and $b$ is an automorphism, we may assume $m_a(u)$, $m_a(v) \leq 1$.
    Then by Lemma \ref{OneAtATime}, $m_a^\circ(\varphi(w))\leq l+1$. Thus $a^{l+2}b^{l+2}$  cannot appear as a subword of the cyclic reduction of $\varphi(w)$.
\end{proof}

\section{Average-case complexity of the Whitehead problem in $F_2$}\label{complexity}

The idea of the average-case complexity appeared in \cite{Knuth}, formalized in \cite{Levin}, and was addressed in the context of group theory for the first time in \cite{KMSS2}. Specifically, the authors of \cite{KMSS2} addressed the average-case complexity of the word and subgroup membership problems in some non-amenable groups and showed that this complexity was linear.

The strategy (used in \cite{KMSS2}) is, for a given input, to run two algorithms in parallel. One algorithm, call it {\it honest}, always terminates in finite time and gives a correct result. The other algorithm, a {\it Las Vegas algorithm}, is a fast randomized algorithm that never gives an incorrect result; that is, it either  produces the correct result or informs about the failure to obtain any result. (In contrast, a {\it Monte Carlo algorithm} is a randomized algorithm whose output may be incorrect with some (typically small) probability.)

A Las Vegas algorithm can improve the time complexity of an honest, ``hard-working", algorithm that always gives a correct answer but is slow. Specifically, by running a fast Las Vegas algorithm and a slow honest algorithm in parallel, one often gets another honest algorithm whose average-case complexity is somewhere in between because there is a large enough proportion
of inputs on which a fast Las Vegas algorithm will terminate with
the correct answer to dominate the average-case complexity. This idea was used in \cite{KMSS2} where it was shown, in particular, that if a group $G$ has the word problem solvable in subexponential time and if $G$ has a non-amenable factor group where the word problem is solvable in a complexity class $\mathcal{C}$, then there is an honest algorithm that solves the word problem in $G$ with average-case complexity in $\mathcal{C}$. Similar results were obtained for the subgroup membership problem.

We refer to \cite{KMSS2} or \cite{OSh} for formal definitions of the average-case complexity of algorithms working with group words; we chose not to reproduce them here and appeal to intuitive understanding of the average-case complexity of an algorithm as the expected runtime instead.

The word and subgroup membership problems are not the only group-theoretic problems whose average-case complexity can be significantly lower than the worst-case complexity.
In \cite{VS}, it was shown that the average-case complexity of the problem of detecting a primitive element in a free group has {\it constant} time complexity (with respect to the length of the input) if the input is a cyclically reduced word. The same idea was later used in \cite{Roy} to design an algorithm, with constant average-case complexity, for detecting {\it relatively primitive} elements, i.e., elements that are primitive in a given subgroup of a free group.

Here we address computational complexity of the following version of the Whitehead problem: given a fixed $u \in F$, decide, on an input $v \in F$ of length $n$, whether or not $v$ is an automorphic image of $u$. We show that
the average-case complexity of this version of the Whitehead problem is constant if the input $v$ is a cyclically reduced word.

This version is a special case of the general Whitehead algorithm that decides, given two elements $u, v \in F_r$,  whether or not $u$ can be taken to $v$ by an automorphism of $F_r$. The worst-case complexity of the Whitehead algorithm is unknown in general (cf. \cite[Problem (F25)]{problems}) but is at most quadratic in $\max(|u|, |v|)=|u|$ if $r=2$, see \cite{MS} and \cite{Khan}.

We note, in passing, that the generic-case complexity of the Whitehead algorithm was shown to be linear in any $F_r$ \cite{KSS}.
Here we are going to address the average-case complexity of the standard  Whitehead algorithm run in parallel with a fast algorithm that detects ``orbit-blocking" subwords in the input word.

Denote by $B(u)$ a word that cannot occur as a subword of any cyclically reduced $\varphi(u), ~\varphi \in Aut(F_2)$.
Given any particular $u \in F_2$, one can easily produce an orbit-blocking word $B(u)$ based on the argument in our proof of Theorem \ref{Orbit} in Section \ref{blocking}. Specifically, one can use $B(u)$ of the form $a^s b^t$, where $s$ and $t$ are positive integers, each larger than the length of $u$.

We emphasize that in the version of the Whitehead problem that we consider here $u$ is not part of the input. Therefore, constructing $B(u)$ does not contribute to complexity of a solution; it is considered to be pre-computed.

A fast algorithm  $\mathcal{T}$ to detect if $B(u)$ is a subword of a (cyclically reduced) input word $v$ would be as follows. Let $n$ be the length of $v$.  The algorithm  $\mathcal{T}$ would read the initial segments of $v$ of length $k$, $k=1, 2, \ldots,$ adding one letter at a time, and check if this initial segment has $B(u)$ as a subword. This takes time bounded by $C\cdot k$ for some constant $C$, see \cite{Knuth2}.

Denote the ``usual" Whitehead algorithm (that would find out whether or not $v$ is an automorphic image of $u$) by $\mathcal{W}$.
Now we are going to run the algorithms $\mathcal{T}$ and  $\mathcal{W}$ in parallel; denote the composite algorithm by $\mathcal{A}$. Then we have:

\begin{theorem}\label{average-case}
Suppose possible inputs of the above algorithm $\mathcal{A}$ are cyclically reduced words that are selected uniformly at random from the set of cyclically reduced words of length $n$.  Then the average-case time complexity (a.k.a expected runtime) of the algorithm $\mathcal{A}$, working on a classical Turing machine, is $O(1)$, a constant that does not depend on $n$. If one uses the ``Deque" (double-ended queue) model of computing \cite{deque} instead of a classical Turing machine, then the ``cyclically reduced" condition on the input can be dropped. 
\end{theorem}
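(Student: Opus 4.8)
The plan is to combine three ingredients: correctness of $\mathcal A$ (which rests on Theorem~\ref{Orbit}), a bound on the runtime of $\mathcal A$ in terms of where the orbit-blocking word $B(u)=a^sb^t$ first appears in the input, and a probabilistic estimate showing that, for a random cyclically reduced input, $B(u)$ appears very early with overwhelming probability.

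First I would dispense with correctness. If $\mathcal T$ detects $B(u)$ as a subword of the cyclically reduced input $v$, then $v$ cannot be $\varphi(u)$ for any $\varphi\in\Aut(F_2)$, since otherwise $v$, being cyclically reduced, would be a cyclically reduced image of $u$ containing $B(u)$, contradicting Theorem~\ref{Orbit}; so $\mathcal A$ safely outputs ``no''. If $\mathcal W$ halts first, $\mathcal A$ returns its (always correct) verdict. Since $\mathcal W$ always halts, so does $\mathcal A$. Next I would bound the runtime. Running $\mathcal T$ and $\mathcal W$ with interleaved steps, $\mathcal A$ halts within twice the number of steps used by whichever of the two first reaches a usable verdict. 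By \cite{Knuth2}, if the first occurrence of $B(u)$ in $v$ ends at position $k$, then $\mathcal T$ halts in $O(k)$ steps; if $B(u)$ never occurs in $v$, then $\mathcal T$ halts with failure after $O(n)$ steps and $\mathcal A$ must wait for $\mathcal W$, which halts in $O(n^2)$ steps by \cite{MS,Khan}. So, writing $K(v)$ for the end position of the first occurrence of $B(u)$ in $v$ and setting $K(v)=n$ when there is none, the runtime of $\mathcal A$ on $v$ is $O(K(v))$ when $B(u)$ occurs in $v$ and $O(n^2)$ otherwise.

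The heart of the argument, and the step I expect to be the main obstacle, is the probabilistic estimate: I want constants $A>0$ and $\rho\in(0,1)$, depending only on $u$, such that for $v$ uniform among cyclically reduced words of length $n$ and every $k\le n$,
\[
\Pr\big[\,B(u)\text{ is not a subword of the first }k\text{ letters of }v\,\big]\le A\rho^{k}.
\]
Since the number of cyclically reduced words of length $n$ is within a bounded factor of the $4\cdot 3^{n-1}$ reduced words of length $n$, and any cyclically reduced word avoiding $B(u)$ among its first $k$ letters is a fortiori such a reduced word, it suffices to prove this with $v$ uniform among reduced words of length $n$. For that I would model $v=x_1x_2\cdots x_n$ as the Markov chain in which $x_1$ is uniform and each $x_{i+1}$ is uniform among the three letters other than $x_i^{-1}$, put $m=|B(u)|=s+t$, and cut $\{1,\dots,k\}$ into $r=\lfloor k/(m+1)\rfloor$ consecutive blocks of length $m+1$. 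From any state there is an admissible choice for the first letter of a block that is different from $a^{-1}$, and then the remaining $m$ letters of the block spell $a^sb^t$ with conditional probability $(1/3)^m$; hence, conditioned on the earlier blocks, the event $E_i$ that block $i$ ends in $a^sb^t$ has probability at least some $\delta>0$ depending only on $m$. As any $E_i$ forces $B(u)$ into the first $k$ letters, the displayed probability is at most $\Pr[\overline{E_1}\cap\dots\cap\overline{E_r}]\le(1-\delta)^r$, which is $\le A\rho^k$ for suitable $A$ and $\rho=(1-\delta)^{1/(m+1)}$.

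Granting the estimate, I would finish as follows. The expected runtime of $\mathcal A$ on a random cyclically reduced word of length $n$ is at most $O\!\big(E[K(v)]\big)+O(n^2)\cdot\Pr[B(u)\text{ does not occur in }v]$; the second term is $O(n^2\rho^{n})=o(1)$, and for the first, $\Pr[K(v)\ge j]=\Pr[B(u)\text{ is not a subword of the first }j-1\text{ letters}]\le A\rho^{j-1}$ once $j-1\ge m$ (and $\le1$ otherwise), so $E[K(v)]=\sum_{j\ge1}\Pr[K(v)\ge j]\le m+A\rho^{m}/(1-\rho)$, a bound independent of $n$; hence the expected runtime is $O(1)$. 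For the Deque statement I would note that the only place the ``cyclically reduced'' hypothesis is used on a classical Turing machine is to avoid a $\Theta(n)$ preprocessing cost, and that in the Deque model one can instead consume the input from both ends at once, cyclically reducing on the fly while scanning the emerging cyclic reduction for $B(u)$; since a uniformly random reduced word of length $n$ loses only $O(1)$ letters in cyclic reduction in expectation and its cyclic reduction is uniform among cyclically reduced words of the remaining length, the estimates above transfer up to $O(1)$ factors, so the expected runtime stays $O(1)$ for inputs uniform among all reduced words of length $n$. The hard part throughout is the displayed exponential-decay bound; the rest is bookkeeping built on Theorem~\ref{Orbit} and the cited worst-case complexities of substring search and of Whitehead's algorithm in rank $2$.
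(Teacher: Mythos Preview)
Your proof follows the same overall architecture as the paper's: bound the expected runtime by splitting into the event that $\mathcal{T}$ finds $B(u)$ early (contributing a sum of the form $\sum_k Ck\,s^k$) versus the complementary event in which one must fall back on $\mathcal{W}$ (contributing $O(n^2)\cdot O(s^n)$), and then handle non-cyclically-reduced inputs via Deque-based cyclic reduction in expected $O(1)$ time. The one substantive difference is how the key exponential-decay estimate $\Pr[B(u)\text{ not in the first }k\text{ letters}]\le A\rho^k$ is obtained: the paper simply invokes the result of \cite{languages} that reduced words avoiding a fixed subword grow exponentially slower than all reduced words, whereas you supply a direct, self-contained Markov-chain block argument. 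Your route is more elementary and removes the external dependence; the paper's is shorter. You are also somewhat more careful than the paper in two places it leaves implicit: the passage from the uniform measure on reduced words to that on cyclically reduced words (via the bounded ratio of their counts), and the claim that the cyclic reduction of a uniformly random reduced word is, conditional on its length, uniform among cyclically reduced words of that length.
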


\begin{proof}

Suppose first that the input word $u$ is cyclically reduced.

\noindent {\bf 1.} First we address complexity of the algorithm $\mathcal{T}$. Here we use
a result of \cite{languages} saying that the number of (freely reduced) words of length $L$ with (any number of) forbidden subwords  grows exponentially slower than the number of all freely reduced words of length $L$.

In our situation, we have at least one $B(u)$ as a forbidden subword.  Therefore, the probability that the initial segment of length $k$ of the word $v$ does not have $B(u)$ as a subword is $O(s^k)$ for some $s, ~0<s<1$. Thus, the average time complexity of the algorithm $\mathcal{T}$ is bounded by

\begin{equation}\label{T}
\sum_{k=1}^n  C\cdot k\cdot s^k,
\end{equation}

\noindent which is bounded by a constant.

\medskip

\noindent {\bf 2.} Now suppose that the input word $v$ of length $n$ does not have any subwords $B(u)$, so that we have to rely on the standard Whitehead algorithm $\mathcal{W}$ for an answer. The probability of this to happen is $O(s^n)$ for some $s, ~0<s<1$, as was mentioned before.

The worst-case time complexity of the Whitehead algorithm is known to be  $O(n^2)$ in the group $F_2$ \cite{Khan}.

Thus, the average-case complexity of the composite algorithm $\mathcal{A}$ is

\begin{equation}\label{A}
\sum_{k=1}^n C\cdot k\cdot s^k + O(n^2) \cdot O(s^n),
\end{equation}

\noindent which is bounded by a constant.

\medskip

\noindent {\bf 3.} Now suppose the input word $u$ is not cyclically reduced. Then we are going to cyclically reduce it.
This cannot be done in constant (or even sublinear) time on a classical Turing machine, so here we are going to use the ``Deque" (double-ended queue) model of computing \cite{deque}. It allows one to move between the first and last letter of a word in constant time. We are going to show that with this facility, one can cyclically reduce any element $v$ of length $n$, in any $F_r$, in constant time (on average) with respect to $n=|v|$. In fact, this was previously shown in \cite{VS}, but we reproduce the proof here to make the exposition complete.

First, recall that the number of freely reduced words of length $n$ in $F_r$ is $2r(2r-1)^{n-1}$.

The following algorithm, that we denote by $\mathcal{B}$, will cyclically reduce $v$ on average in constant time with respect to $n=|v|$.

This algorithm will compare the first letter of $v$, call it $a$, to the last letter, call it $z$. If $z \ne a^{-1}$, the algorithm stops right away. If $z = a^{-1}$, the first and last letters are deleted, and the algorithm now works with this new word.

The probability of $z = a^{-1}$ is $\frac{1}{2r}$ for any freely reduced word whose letters were selected uniformly at random from the set  $\{x_1, \ldots, x_r, x_1^{-1}, \ldots, x_r^{-1}\}$. At the next step of the algorithm, however, the letter immediately following $a$ cannot be equal to $a^{-1}$ if we assume that the input is a freely reduced word, so at the next steps (if any) of the algorithm $\mathcal{B}$ the probability of the last letter being equal to the inverse of the first letter will be $\frac{1}{2r-1}$. Then the expected runtime of the algorithm $\mathcal{B}$ on an input word of length $n$ is:

$$\sum_{k=1}^{\frac{n}{2}} \frac{1}{2r} (\frac{1}{2r-1})^{k-1} \cdot k < \sum_{k=1}^\infty (\frac{1}{2r-1})^k \cdot k.$$

\noindent The infinite sum on the right is known to be equal to $\frac{2r-1}{(2r-2)^2}$; in particular, it is constant with respect to $n$.

\end{proof}

\subsection{Acknowledgement} We are grateful to the referee for suggesting several simplifications of our original proof of Theorem 1.

\end{document}